\theoremstyle{definition}
\newtheorem{theorem}{Theorem}[section]
\newtheorem{lemma}[theorem]{Lemma}
\newtheorem{corollary}[theorem]{Corollary}
\newtheorem{example}[theorem]{Example}
\DeclareMathOperator*{\ilim}{ind~ \! lim}
\title{ Imbedding Exotic Hida-Kubo-Takenaka Spaces into usual Hida disributions}
\author{Kei Harada \\
\begin{small}
Graduate School of Mathematics, Nagoya University 
\end{small} 
\\
\begin{small}
E-mail : kei.harada@math.nagoya-u.ac.jp
\end{small}
}
\date{}
\begin{document}

\maketitle
\begin{abstract}
We show that $\Gamma (\mathcal{N}_p) $, a subspace of exotic Hida-Kubo-Takenaka space is  naturally imbedded into $ (E)^\ast $, the usual 
Hida-Kubo-Takenaka space under some conditions. We also study on Heat Equations associated with exotic Laplacians, such as L\'{e}vy Laplacian. 
\end{abstract}
Keywords: White noise theory; L\'{e}vy Laplacian; Exotic Laplacian; Gross Laplacian. \\
AMS Subject Classification: 60H40.  

\section{Inroduction}
An infinite dimensional Laplacian introduced by P. L\'{e}vy, the so called L\'{e}vy Laplacian, has attracted  many scientists. 
Studying L\'{e}vy Laplacian is important in infinite dimensional analysis, 
because L\'{e}vy Laplacian is inherent to infinite dimensional spaces. In other words,  
it does not have an easy finite dimensional analogue. 
This is the point that makes L\'{e}vy Laplacian difficult and more interesting. 

In a paper of Ji-Saito \cite{JIS}, they proved that L\'{e}vy Laplacian can be identified with the Gross Laplacian of other infinite dimensional spaces. This theory is then extended to more general operators, which are called Exotic Laplacians, in Accardi-Ji-Saito \cite{ACJIS}. 

Our purpose in this paper is to investigate the relationships between these special spaces and the usual Hida distributions. We show that regular functionals on special spaces can be naturally imbedded into the usual Hida distributions. This imbedding enables us to calculate 
the problems on Exotic Laplacians as if they were the problems on Gross Laplacians. This makes the problems much easier, because we know quite well about Gross Laplacian. As an example, we  deal with heat equations generated by Exotic Laplacians in Section 4.

\section{Preliminaries}\label{2}
We follow the notations of \cite{ACJIS} and \cite{JIS}.
Let $\{ e_k \}_{k=1}^{\infty }$ be an orthonormal basis of a complex Hilbert space $H$, 
and let $\{ \lambda _k\}_{k=1}^{\infty} \subset \mathbb{R}$ satify 
\[
1< \lambda _1 \leq \lambda _2 \leq \cdots \text{and} 
\sum_{k=1}^{\infty} \lambda _k^{-2} < \infty .
\] 
For each $p \in \mathbb{R}$ and 
$\xi = \sum _{k=1}^{\infty}\alpha _k e_k \in H$, set 
\[
|\xi|_{p}^2 = \sum _{k=1}^{\infty} \lambda _k ^{2p} |\alpha _k|^2 .
\]
For each $ p \geq 0$, let $E_p = \{ \xi \in H ; |\xi|_{p} < \infty \}$ and let
$E_{-p}$ be the completion of $H$ with respect to $|\cdot |_{-p}$. 
A countably nuclear Hilbert space $E$ is defined by 
$E = \projlim _{p \to +\infty} E_p  $,
and its dual $E^{\ast}$ satisfies 
$E^{\ast} =\ilim_{p \to +\infty}E_{-p}$,
thus we have a basic Gelfand triple $E \subset H \subset E^{\ast}$. 

\subsection{Hida-Kubo-Takenaka spces}\label{21}
Let $\Gamma (E_p)$ denote the Fock space over $E_p$, i.e., 
\[
\Gamma (E_p) = \left\{ \phi = (f_n)_{n=0}^{\infty} ;
f_n \in E_p^{\hat{\otimes}n}, \| \phi \| _{H,p}^2 = 
\sum_{n=0}^{\infty} n! |f_n|_{E_p^{\hat{\otimes}n}}^2 < \infty \right\}.
\]
Identifying $\Gamma (E_0)$ with its dual space, we have 
\[
\cdots \subset \Gamma (E_q) \subset \Gamma (E_p) \subset \Gamma (E_0) \subset \Gamma (E_{-p}) \subset \Gamma (E_{-q}) \subset \cdots 
\] 
for $0<p<q$, and we have a higher Gelfand triple 
\[
(E) = \projlim_{p \to +\infty} \Gamma (E_{p}) \subset \Gamma (H) 
\subset (E)^{\ast} = \ilim_{p \to +\infty} \Gamma (E_{-p}).
\]
The {\em exponential vector} associated with $\xi \in E$ is defined by 
\[
\phi _{\xi} = \left( 1, \xi, \frac{\xi ^{\otimes 2}}{2!}, \cdots ,
\frac{\xi ^{\otimes n}}{n!} , \cdots \right) .
\]
It is easy to see that $\phi _ {\xi } \in (E) $. The {\em S-transform}  
of an element $\Phi \in (E)^{\ast}$ is defined by 
\[
S\Phi (\xi ) = \langle \! \langle \Phi, \phi _{\xi} \rangle \! \rangle , ~~~~~~\xi \in E,  
\]
where $\langle \! \langle \cdot , \cdot \rangle \! \rangle$ denotes the canonical $\mathbb{C}$-bilinear form on $(E)^{\ast} \times (E)$.

The trace operator $\tau$ is defined by 
\[
\tau = \sum _{k=1} ^{\infty } e_k^{\ast } J_e \otimes e_k^{\ast }, 
\]
which belongs to $E_{-1/2} \otimes E_{-1/2}$. 
And the Gross Laplacian $\Delta _G$ on $(E)$ is represented by 
\[
\Delta _G \phi = ( (n+2) (n+1) \tau \hat {\otimes} ^2 f_{n+2})
\] 
for any $\phi = (f_n)_{n=0} ^{\infty }$.

Let $a \in (1/2,\infty)$. 
Let $\text{Dom}(\Delta _{c,2a-1})$ denote the set of all $\Phi \in (E)^{\ast} $ such that the limit 
\[
\widetilde {\Delta} _{c,2a-1} S\Phi (\xi) = 
\lim_{N \to \infty} \frac {1}{N^{2a-1}} \sum _{k=1}^{N}
\langle (S \Phi)^{\prime \prime}(\xi ), e_k\otimes e_k \rangle , (\xi \in E)  
\]
exists for each $\xi \in E$ and the functional $\widetilde {\Delta}_{c,2a-1}(S\Phi)$ is the $S$-transform of an element  in $(E)^{\ast}$. The {\em Exotic Laplacian } $\Delta _{2a-1}$ on $\text {Dom} (\Delta _{c,2a-1})$ is defined by 
\[
\Delta _{c,2a-1}\Phi = S^{-1}(\widetilde {\Delta} _{c,2a-1}S \Phi ) . 
\]

\subsection{Exotic Hida-Kubo-Takenaka spaces}
Let a parameter $a \in (1/2, \infty )$ be fixed, 
and let a sequence $\{e_{a,k}\}_{k=1}^{\infty} \subset E^{\ast}$ satisfy the following three conditions:

\noindent
(C1) for each $k_1, k_2$, 
\begin{equation*}
\lim _{N \to \infty} \frac{1}{N^{2a-1}} \sum _{j=1} ^{N}
\overline{ \langle e_{a, k_1}, e_j \rangle }\langle e_{a,k_2}, e_j\rangle = \begin{cases}
1 & k_1 = k_2 \\
0 & k_1 \neq k_2, 
\end{cases}
\end{equation*}

\noindent
(C2) There exists some $p>0$ and $M>0$ satisfying 
\begin{equation*}
|e_{a,k}| _{-p} \leq M 
\end{equation*}
for all $k$.

\noindent
(C3) for any $ \alpha = \{ \alpha_k  \} _{k=1}^{\infty} \in \ell ^1 $ with $\alpha \neq 0$,  
\[
\sum \alpha _k e_{a,k} \neq 0, 
\]
where the sum is taken in $E^\ast$.

\begin{example}\label{pw}
Set 
\[
e_{a,k} = \sqrt{2a-1} \sum _{m=1}^{\infty} e^{i2\pi q_k r_m}m^{a-1} e_m , 
\]
where $\{ q_k\}_{k=1}^{\infty} = [0,1) \cap \mathbb{Q}$ and $\{r_m \}=\{ 0, 1, -1, 2, -2, \cdots \} $.

This example, which is taken from \cite{ACJIS}, satisfies (C1), (C2) and (C3).  
\end{example} 
\begin{proof}
Since 
\[
\sum _{j=1} ^{N}
\overline{ \langle e_{a, k_1}, e_j \rangle }\langle e_{a,k_2}, e_j\rangle = 
(2a-1) \sum _{j=1} ^N e^{i2\pi (q_{k_2}-q_{k_1}) r_j} j^{2a-2}, 
\]
it is easy to see that $\{e_{a,k}\}$ satisfies (C1). By the definition of $|\cdot |_{-p}$, 
\[
|e_{a,k}|_{-(2a-1)} ^2 = \sum _{m=1}^{\infty } \frac{1}{\lambda_m^2} \cdot \frac{m^{2a-2}}{\lambda _m ^{4a-4}} < + \infty ,
\]
which implies (C2). To see (C3), for each $\alpha \in \ell^1$, 
set a measure $\mu _{\alpha}$ on $[0,1)$ by 
\[
\mu _{\alpha} = \sum _{k=1} ^{\infty} \alpha _k \delta _{q_k }.
\]
Assume $\sum \alpha _k e_{a,k} = 0 $, then 
\[
\mu _\alpha (n) = \int _{[0,1)} e^{i2\pi nx} d\mu_{\alpha}(x) =  
\sum _{k=1} ^{\infty} \alpha _k e^{i2\pi q_k n} =0, 
\]
for any $n \in \mathbb{Z}$, because $\langle \sum \alpha _k e_{a,k}, e_m \rangle =0$ for all $m \in \mathbb{N}$. This shows $\mu _{\alpha }=0$, and hence $\alpha =0$.
\end{proof}

Let $H_{c,2a-1}$ denote the Hilbert space with orthonormal basis $\{e_{a,k}\}_{k=1}^{\infty} $, then by (C1), its inner product $\langle \cdot , \cdot \rangle _{c,2a-1}$ is characterized by 
\begin{equation}\label{CI}
\langle z,w\rangle _{c,2a-1} = \lim _{N \to \infty} \frac{1}{N^{2a-1}}
\sum_{k=1}^{N} \overline{ \langle z,e_k\rangle } \langle w,e_k \rangle 
\end{equation}
for all $z,w \in \text{Span} \{ e_{a,1}, e_{a,2}, \cdots  \}$. And by (C2), \eqref{CI} holds for all $z= \sum \alpha _k e_{a,k}$ and $w = \sum \beta _k e_{a,k}$, where $\alpha, \beta \in l^1$. 

Note that $H_{c, 2a-1}$ does not contain every $x \in E^{\ast}$ which has the limit 
$
\lim _{N \to \infty} \frac{1}{N^{2a-1}} \sum _{j=1} ^{N}
| \langle x, e_j \rangle |^2 .
$

Now let us formulate Hida-Kubo-Takenaka space. Let $\{ \lambda _{a,k}\}_{k=1}^{\infty} \subset \mathbb{R}$ satify 
\[
1< \lambda _{a,1} \leq \lambda _{a,2} \leq \cdots \text{and} 
\sum_{k=1}^{\infty} \lambda _{a,k}^{-2} < \infty ,
\] 
then by the same procedure as in Sec.\ref{2}, we have another basic Gelfand triple $\mathcal{N}_a \subset H_{c,2a-1} \subset \mathcal{N}_a^{\ast}$, called the {\em exotic triple} and 
the associated trace 
\[
\tau _a = \sum _{k=1}^{\infty } e_{a,k} \otimes e_{a,k} \in \mathcal{N}_{a,-1/2} \otimes \mathcal{N}_{a,-1/2}
\]
is called the {\em exotic trace} of order $2a-1$. 

By the same procedure as in Sec.\ref{21}, we have a chain of Fock spaces 
\[
\cdots \subset \Gamma (\mathcal{N}_{a,q}) \subset \Gamma (\mathcal{N}_{a,p}) \subset \Gamma (\mathcal{N}_{a,0}) \subset \Gamma (\mathcal{N}_{a,-p}) \subset \Gamma (\mathcal{N}_{a,-q}) \subset \cdots ,0<p<q, 
\]
and we have another higher Gelfand triple 
\[
(\mathcal{N}_a) = \projlim_{p \to +\infty} \Gamma (\mathcal{N}_{a,p}) \subset \Gamma (H_{c,2a-1}) 
\subset (\mathcal{N}_a)^{\ast} = \ilim_{p \to +\infty} \Gamma (\mathcal{N}_{a,-p}).
\]
which is called {\em exotic Hida-Kubo-Takenaka space} of order $2a-1$.  

The associated Gross Laplacian $\Delta _{G,2a-1} $ is defined on $( \mathcal{N}_a)$ and is represented by  
\[
\Delta _{G,2a-1} \phi = ( (n+2) (n+1) \tau _a \hat {\otimes} ^2 f_{n+2})
\] 
for any $\phi = (f_n)_{n=0} ^{\infty } \in (\mathcal{N}_a)$.

Note that in the case $a=1$, $Delia _{c, 1}$ is the L\'{e}vy Laplacian. 
This case is studied in \cite{JIS}. 

\section{Proof of the Main Theorem}
In this section, we prove that $\Gamma (\mathcal{N}_p)$ is imbedded into $(E)^{\ast}$ 
under conditions (C1)-(C3).  
\begin{lemma}\label{111}
Let $f_n$ be an element in 
$\mathcal{N}_{a,1}^{\otimes n} \cap E_{-p}^{\otimes n}$, 
then the inequality 
\[
|f_n|_{E_{-p}^{\otimes n}} \leq M^n
\left( \sum _{k = 1} ^{\infty} \lambda _{a,k} ^{-2}\right) ^{ n/2 }
|f_n|_{\mathcal{N}_{a,1}^{\otimes n}}
\]
holds.
\end{lemma}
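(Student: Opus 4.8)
The plan is to reduce everything to the rank-one case $n=1$ and then tensorize. First I would show that the prescription $\iota(e_{a,k}) = e_{a,k}$ --- where on the left $e_{a,k}$ is regarded as the $k$-th basis vector of $\mathcal{N}_{a,1}\subset H_{c,2a-1}$ and on the right as an element of $E_{-p}$ via (C2) --- extends to a bounded linear map $\iota:\mathcal{N}_{a,1}\to E_{-p}$. Writing $f=\sum_k c_k e_{a,k}\in\mathcal{N}_{a,1}$ we have $|f|_{\mathcal{N}_{a,1}}^2=\sum_k\lambda_{a,k}^2|c_k|^2$ by the definition of the norm on $\mathcal{N}_{a,1}$. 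The summability hypothesis $\sum_k\lambda_{a,k}^{-2}<\infty$ together with the Cauchy--Schwarz inequality gives $\sum_k|c_k|=\sum_k\lambda_{a,k}^{-1}\bigl(\lambda_{a,k}|c_k|\bigr)\le\bigl(\sum_k\lambda_{a,k}^{-2}\bigr)^{1/2}|f|_{\mathcal{N}_{a,1}}$, so the coefficient sequence lies in $\ell^1$. Hence $\sum_k c_k e_{a,k}$ converges absolutely in $E_{-p}$, because $|e_{a,k}|_{-p}\le M$ by (C2), and $|f|_{-p}\le M\sum_k|c_k|\le M\bigl(\sum_k\lambda_{a,k}^{-2}\bigr)^{1/2}|f|_{\mathcal{N}_{a,1}}$. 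This is precisely the asserted inequality in the case $n=1$.

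For general $n$ I would pass to the $n$-fold Hilbert tensor power $\iota^{\otimes n}:\mathcal{N}_{a,1}^{\otimes n}\to E_{-p}^{\otimes n}$. Since the Hilbertian tensor norm is a cross-norm, operator norms multiply, $\|\iota^{\otimes n}\|=\|\iota\|^n$, and the factor $M^n\bigl(\sum_k\lambda_{a,k}^{-2}\bigr)^{n/2}$ drops out at once. Equivalently, and more concretely, I would expand $f_n=\sum_{k_1,\dots,k_n}c_{k_1\cdots k_n}\,e_{a,k_1}\otimes\cdots\otimes e_{a,k_n}$, use the cross-norm identity $|e_{a,k_1}\otimes\cdots\otimes e_{a,k_n}|_{E_{-p}^{\otimes n}}=\prod_i|e_{a,k_i}|_{-p}\le M^n$, apply the triangle inequality, and then invoke the multi-index Cauchy--Schwarz inequality with the weight pair $\prod_i\lambda_{a,k_i}$ and $\prod_i\lambda_{a,k_i}^{-1}$. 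The resulting factor $\sum_{k_1,\dots,k_n}\prod_i\lambda_{a,k_i}^{-2}=\bigl(\sum_k\lambda_{a,k}^{-2}\bigr)^n$ is exactly what produces the stated power.

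The one point that genuinely needs care --- the main, though mild, obstacle --- is the well-definedness implicit in the hypothesis $f_n\in\mathcal{N}_{a,1}^{\otimes n}\cap E_{-p}^{\otimes n}$: I must verify that the element obtained by embedding the $\mathcal{N}_{a,1}^{\otimes n}$-coefficient expansion of $f_n$ into $E_{-p}^{\otimes n}$ really coincides with $f_n$ viewed as an element of $E_{-p}^{\otimes n}$, so that the symbol $|f_n|_{E_{-p}^{\otimes n}}$ is unambiguous. This is where condition (C3) enters: it guarantees that $\iota$, and hence $\iota^{\otimes n}$, is injective, so the two identifications of $f_n$ are consistent and no ambiguity arises. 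I would also record that the constant $M$ and the particular index $p$ appearing in the statement are exactly those furnished by (C2), and that the absolute convergence established in the $n=1$ step is what legitimizes the term-by-term estimates used throughout.
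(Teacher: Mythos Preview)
Your proposal is correct and, in its concrete form (the multi-index expansion with Cauchy--Schwarz against the weights $\prod_i\lambda_{a,k_i}^{\pm 1}$ followed by the bound $|e_{a,k_1}\otimes\cdots\otimes e_{a,k_n}|_{E_{-p}^{\otimes n}}\le M^n$ from (C2)), it is exactly the paper's argument. The only organizational difference is that the paper does not invoke (C3) inside the proof of this lemma: it simply bounds the $E_{-p}^{\otimes n}$-norm of the series $\sum b_{k_1\cdots k_n}e_{a,k_1}\otimes\cdots\otimes e_{a,k_n}$ and postpones the well-definedness/injectivity issue you flag to a separate remark and theorem immediately afterward.
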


\begin{proof}
Set 
\[
b_{k_1, k_2, \cdots ,k_n } = \langle 
e_{a, k_1} \otimes e_{a, k_2} \otimes \cdots \otimes e_{a, k_n}, f_n
\rangle _{c, 2a-1},
\]
then 
\[
f_n = \sum_{k_1, k_2, \cdots ,k_n =1} ^{\infty} 
b_{k_1, k_2, \cdots ,k_n } 
e_{a, k_1} \otimes e_{a, k_2} \otimes \cdots \otimes e_{a, k_n}, 
\]
where the convergence is defined in terms of $\mathcal{N}_{a,1}^{\otimes n}$. 
We have  
\[
\sum _{k_1, k_2, \cdots ,k_n =1} ^{\infty} 
|b_{k_1, k_2, \cdots ,k_n }|^2 \lambda _{a, k_1}^2 \lambda _{a, k_2}^2 \cdots \lambda _{a, k_n}^2 
= |f_n|_{\mathcal{N}_{a,1}^{\otimes n}}^2, 
\]
and, by Cauchy-Schwarz inequality, we obtain 
\begin{equation*}
\begin{split}
&\left( \sum _{k_1, k_2, \cdots ,k_n =1} ^{\infty} 
|b_{k_1, k_2, \cdots ,k_n }| \right)^2\\
&\leq 
\sum _{k_1, k_2, \cdots ,k_n =1} ^{\infty} 
|b_{k_1, k_2, \cdots ,k_n }|^2 \lambda _{a, k_1}^2 \lambda _{a, k_2}^2 \cdots \lambda _{a, k_n}^2 
\cdot 
\sum _{k_1, k_2, \cdots ,k_n =1} ^{\infty}
\lambda _{a, k_1}^{-2} \lambda _{a, k_2}^{-2} \cdots \lambda _{a, k_n}^{-2} \\
&=
\left( \sum _{k = 1} ^{\infty} \lambda _k ^{-2}\right) ^{n} 
|f_n|_{\mathcal{N}_{a,1}^{\otimes n}}^2 < \infty , 
\end{split}
\end{equation*}
which shows $\{ b_{k_1, k_2, \cdots ,k_n }\} \in \ell ^1 (\mathbb{N}^n)$. 
Now, by (C2), we obtain 
\begin{equation*}
\begin{split}
|f_n|_{E_{-p}^{\otimes n}}
&\leq
\sup  _{k_1, k_2, \cdots ,k_n}| e_{a,k_1} \otimes e_{a,k_2} \otimes \cdots \otimes e_{a,k_n}|_{E_{-p}^{\otimes n}} \cdot
\sum _{k_1, k_2, \cdots k_n =1} ^{\infty} 
|b_{k_1, k_2, \cdots ,k_n } |  \\
&\leq M^n
\left( \sum _{k = 1} ^{\infty} \lambda _{a, k} ^{-2}\right) ^{n/2} 
|f_n|_{\mathcal{N}_{a,1}^{\otimes n}} .
\end{split}
\end{equation*}
\end{proof}

More precisely, ``$f_n \in \mathcal{N}_{a,1}^{\otimes n} \cap E_{-p}^{\otimes n}$''
should imply the existance of a universal set $X$ satisfying $\mathcal{N}_{a,1}^{\otimes n} \subset X$ and $E_{-p}^{\otimes n} \subset X$.
In this case, however, it is not appriori given. 
The above lemma suggests a reasonable inclusion map $i : \mathcal{N}_{a,1}^{\otimes n} \to E_{-p}^{\otimes n}$ defined by 
\begin{equation*}
\begin{split}
\mathcal{N}_{a,1}^{\otimes n} \ni f_n &\mapsto \{ b_{k_1,k_2, \cdots ,k_n }\} \in \ell^1 \\
&\mapsto \sum_{k_1, k_2, \cdots k_n =1} ^{\infty} 
b_{k_1, k_2, \cdots ,k_n } 
e_{a, k_1} \otimes e_{a, k_2} \otimes \cdots \otimes e_{a, k_n} \in E_{-p}^{\otimes n}.
\end{split}
\end{equation*}
\begin{theorem}
$\mathcal{N}_{a,1}^{\otimes n}$ is imbedded into $E_{-p}^{\otimes n}$ by the above inclusion map $i$. 
\end{theorem}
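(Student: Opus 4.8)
The plan is to verify that the map $i$ is a well-defined, continuous, and injective linear map from $\mathcal{N}_{a,1}^{\otimes n}$ into $E_{-p}^{\otimes n}$; the assertion that $\mathcal{N}_{a,1}^{\otimes n}$ is ``imbedded'' then amounts exactly to these three properties, of which injectivity is the only delicate one.

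First I would dispose of well-definedness and continuity, which are essentially already contained in Lemma \ref{111}. The computation in that proof shows that the coefficient family $\{b_{k_1,\ldots,k_n}\}$ attached to $f_n$ lies in $\ell^1(\mathbb{N}^n)$, and since by (C2) one has $|e_{a,k_1}\otimes\cdots\otimes e_{a,k_n}|_{E_{-p}^{\otimes n}}=\prod_{j}|e_{a,k_j}|_{-p}\le M^n$, the defining series converges absolutely in $E_{-p}^{\otimes n}$, so $i(f_n)$ is a bona fide element there. Linearity of $i$ is immediate from linearity of $f_n\mapsto\{b_{k_1,\ldots,k_n}\}$, and the estimate of Lemma \ref{111} gives $|i(f_n)|_{E_{-p}^{\otimes n}}\le M^n(\sum_k\lambda_{a,k}^{-2})^{n/2}|f_n|_{\mathcal{N}_{a,1}^{\otimes n}}$, i.e. continuity.

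The main work is injectivity, and this is where (C3) enters essentially. Suppose $i(f_n)=0$. Testing against the elementary tensors $e_{m_1}\otimes\cdots\otimes e_{m_n}\in E\subset E_p^{\otimes n}$ (each $e_m\in E$, and the pairing is legitimate because $b\in\ell^1$ and $|\langle e_{a,k},e_m\rangle|\le M\lambda_m^p$ by (C2)) yields
\[
\sum_{k_1,\ldots,k_n} b_{k_1,\ldots,k_n}\,\langle e_{a,k_1},e_{m_1}\rangle\cdots\langle e_{a,k_n},e_{m_n}\rangle=0
\qquad\text{for all }m_1,\ldots,m_n .
\]
I then peel off the indices one at a time. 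Fixing $m_2,\ldots,m_n$ and setting $c_{k_1}=\sum_{k_2,\ldots,k_n}b_{k_1,\ldots,k_n}\langle e_{a,k_2},e_{m_2}\rangle\cdots\langle e_{a,k_n},e_{m_n}\rangle$, the family $\{c_{k_1}\}$ is again in $\ell^1$ and satisfies $\langle\sum_{k_1}c_{k_1}e_{a,k_1},e_{m_1}\rangle=0$ for every $m_1$. Since $\mathrm{Span}\{e_m\}$ is dense in $E$, an element of $E^\ast$ annihilating every $e_m$ must vanish, so $\sum_{k_1}c_{k_1}e_{a,k_1}=0$ in $E^\ast$, and (C3) forces $c_{k_1}=0$ for all $k_1$. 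This reduces the identity to the same shape with one fewer index, so an induction on $n$ (the base case $n=1$ being precisely (C3)) gives $b_{k_1,\ldots,k_n}=0$ for all multi-indices, hence $f_n=0$.

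I expect the injectivity step to be the main obstacle. The tempting shortcut is to argue that $i$ is the $n$-fold tensor of the injective map $\alpha\mapsto\sum\alpha_k e_{a,k}$ and that ``tensor products of injections are injective,'' but injectivity is not automatically preserved under Banach-space tensor products, so this is unsafe. The explicit peeling argument above circumvents this by reducing everything, through repeated use of (C3) and the density of $\mathrm{Span}\{e_m\}$ in $E$, to the one-dimensional case. The only points demanding care are the absolute convergence justifying each partial pairing and the $\ell^1$-membership of the intermediate coefficients $\{c_{k_1}\}$, both of which follow from (C2) together with the $\ell^1$ bound established in Lemma \ref{111}.
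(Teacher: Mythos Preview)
Your proposal is correct and follows essentially the same strategy as the paper: both argue by induction on $n$ with (C3) as the base case, peeling off one tensor factor at a time. The paper phrases the inductive step as a right contraction with $e_j$ and then invokes the injectivity at level $m$, whereas you pair against the full elementary tensor $e_{m_1}\otimes\cdots\otimes e_{m_n}$ and strip off the first index via (C3); these are the same argument in slightly different dress. Your version is in fact a bit more careful, since you make explicit the $\ell^1$ membership of the intermediate coefficients and the well-definedness/continuity from Lemma~\ref{111}, points the paper leaves implicit.
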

\begin{proof}
What is left is to show that $i$ is injective. 

There is no need to show in the case $n=1$, because it is condition (C3). 
Assume that $i$ is injective when $n=m$. Let $\{ b_{k_1, \cdots , k_{m}, k_{m+1}} \}$ satisfy 
\[
\sum_{k_1, \cdots ,k_{m},k_{m+1}=1}^{\infty} b_{k_1,  \cdots , k_{m}, k_{m+1}}  e_{a,k_1}  \otimes \cdots \otimes e_{a,k_m} \otimes  e_{a,k_m+1} =0.
\]
By taking the right contraction with $e_j$, we have 
\begin{equation*} 
\begin{split}
\sum _{k_1, \cdots k_m =1}^{\infty} 
\left( \sum _{k_{m+1}=1}^{\infty} b_{k_1,  \cdots , k_m, k_{m+1}} \langle e_{a, k_{m+1}}, e_j\rangle \right) 
e_{a,k_1}  \otimes \cdots \otimes e_{a,k_m} 
=0
\end{split}
\end{equation*}
for all $j \in \mathbb{N}$. By the assumption, we obtain 
\[
\sum _{k_{m+1}=1}^{\infty} b_{k_1,  \cdots , k_m, k_{m+1}} \langle e_{a, k_{m+1}}, e_j\rangle
\]
for all $k_1, \cdots k_m$ and $j $. Now, by (C3), $b_{k_1,  \cdots , k_m, k_{m+1}} =0 $.
\end{proof}

\begin{corollary}
Let $\phi = (f_n) \in \Gamma (\mathcal{N}_{a,1})$, then $\phi$ is included in $\Gamma (E_{-q})$ for some $q \in \mathbb{N}$. 
\end{corollary}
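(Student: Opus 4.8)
The plan is to realise the embedded element termwise and then estimate its norm in a single Fock space $\Gamma(E_{-q})$. By the preceding Theorem, each $f_n$ is sent by the inclusion $i$ to a genuine element $i(f_n)\in E_{-p}^{\otimes n}\subset E_{-q}^{\otimes n}$ for every $q\geq p$, so the candidate image of $\phi$ is the sequence $(i(f_n))_{n=0}^{\infty}$. It therefore suffices to exhibit one $q$ for which $\sum_{n=0}^{\infty} n!\,|i(f_n)|_{E_{-q}^{\otimes n}}^2<\infty$. The naive route --- invoking Lemma \ref{111} at the fixed level $p$ and using $|i(f_n)|_{E_{-q}^{\otimes n}}\leq |i(f_n)|_{E_{-p}^{\otimes n}}$ --- is not enough, because it only yields $n!\,|i(f_n)|_{E_{-q}^{\otimes n}}^2\leq C^n\,n!\,|f_n|_{\mathcal{N}_{a,1}^{\otimes n}}^2$ with $C=M^2\sum_k\lambda_{a,k}^{-2}$, a constant that may exceed $1$ and spoil convergence.

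The remedy is to track the level-$q$ norm of the basis vectors throughout the Lemma \ref{111} computation rather than bounding it crudely by $M$. Repeating that argument with $q$ in place of $p$ gives, for all $q\geq p$,
\[
|i(f_n)|_{E_{-q}^{\otimes n}} \leq M_q^{\,n}\Big(\sum_{k=1}^{\infty}\lambda_{a,k}^{-2}\Big)^{n/2}|f_n|_{\mathcal{N}_{a,1}^{\otimes n}}, \qquad M_q:=\sup_k|e_{a,k}|_{-q},
\]
since the supremum $\sup_{k_1,\dots,k_n}|e_{a,k_1}\otimes\cdots\otimes e_{a,k_n}|_{E_{-q}^{\otimes n}}=M_q^{\,n}$ is the only place where the level enters. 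I would then show that $M_q$ decays geometrically in $q$. Writing $|e_{a,k}|_{-q}^2=\sum_m\lambda_m^{-2q}|\langle e_{a,k},e_m\rangle|^2$ and using $\lambda_m\geq\lambda_1>1$ to factor $\lambda_m^{-2q}=\lambda_m^{-2p}\lambda_m^{-2(q-p)}\leq\lambda_1^{-2(q-p)}\lambda_m^{-2p}$, condition (C2) gives $M_q\leq\lambda_1^{-(q-p)}M\to 0$ as $q\to\infty$.

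Finally, set $C_q:=M_q^2\sum_k\lambda_{a,k}^{-2}$; by the decay just obtained one may fix $q\in\mathbb{N}$, $q\geq p$, so large that $C_q\leq 1$. For that $q$,
\[
\sum_{n=0}^{\infty} n!\,|i(f_n)|_{E_{-q}^{\otimes n}}^2 \leq \sum_{n=0}^{\infty} C_q^{\,n}\,n!\,|f_n|_{\mathcal{N}_{a,1}^{\otimes n}}^2 \leq \sum_{n=0}^{\infty} n!\,|f_n|_{\mathcal{N}_{a,1}^{\otimes n}}^2 = \|\phi\|_{\mathcal{N}_{a,1}}^2 < \infty,
\]
so $(i(f_n))\in\Gamma(E_{-q})$, as desired. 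The one genuinely delicate point is the second paragraph: the straightforward combination of Lemma \ref{111} with a norm comparison is insufficient, and convergence is recovered only by observing that raising the dual index makes the uniform bound on $\{e_{a,k}\}$ contract, which is exactly where the hypothesis $\lambda_1>1$ is used.
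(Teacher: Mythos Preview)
Your proof is correct and follows essentially the same route as the paper. The paper simply fixes $m\in\mathbb{N}$ with $\lambda_1^{m}\geq M\big(\sum_k\lambda_{a,k}^{-2}\big)^{1/2}$, sets $q=p+m$, and invokes Lemma~\ref{111} together with the contraction $|f_n|_{E_{-(p+m)}^{\otimes n}}\leq \lambda_1^{-mn}|f_n|_{E_{-p}}^{\otimes n}$ to obtain $|f_n|_{E_{-q}^{\otimes n}}\leq |f_n|_{\mathcal{N}_{a,1}^{\otimes n}}$; this is precisely your choice of $q$ so that $C_q\leq 1$ via $M_q\leq \lambda_1^{-(q-p)}M$, just packaged more tersely.
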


\begin{proof}
Let $m \in \mathbb{N}$ satisfy $\lambda_1 ^m \geq M ( \sum _{i = 1} ^{\infty} \lambda _i ^{-2} )^{1/2}$, then by Lemma $\ref{111}$, we have 
\[
|f_n|_{E_{-(p+m)}^{\otimes n}} \leq 
|f_n|_{\mathcal{N}_{a,1}^{\otimes n}},
\]
and hence we obtain 
\[
\| \phi \| _{H,p} ^2 = \sum _{n=0} ^{\infty} 
n! |f_n|_{E_{-(p+m)}^{\otimes n}}^2
\leq 
\sum _{n=0} ^{\infty} 
n! |f_n|_{\mathcal{N}_{a,1}^{\otimes n}}^2
= \| \phi \| _{\mathcal{N}_{a,1}}^2 .
\]
\end{proof}

The results given in \cite{ACJIS} is rewritten in the following way. 
\begin{theorem}
Let $\phi = (f_n)_{n=1}^{\infty } \in (\mathcal {N}_a)$, then $i(\phi ) \in \text{Dom} (\Delta _{c,2a-1})$ 
and 
\[
i(\Delta _{G, 2a-1} \phi ) = \Delta _{c,2a-1} i(\phi) .
\]
\end{theorem}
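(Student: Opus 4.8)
I sketch a plan for proving that the imbedding $i$ intertwines the exotic Gross Laplacian $\Delta_{G,2a-1}$ on $(\mathcal{N}_a)$ with the Exotic Laplacian $\Delta_{c,2a-1}$ on $(E)^{\ast}$.

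The plan is to reduce the statement to an identity between $S$-transforms and then verify that identity by a direct computation governed by condition (C1). First I would record that the target element already lives in the usual distribution space: since the exotic Gross Laplacian maps $(\mathcal{N}_a)$ into itself, $\Delta_{G,2a-1}\phi=((n+2)(n+1)\tau_a\hat{\otimes}^2 f_{n+2})$ belongs to $(\mathcal{N}_a)\subset\Gamma(\mathcal{N}_{a,1})$, so by the preceding Corollary both $i(\phi)$ and $i(\Delta_{G,2a-1}\phi)$ lie in $\Gamma(E_{-q})\subset(E)^{\ast}$ for a suitable $q$. Because $S$ is injective on $(E)^{\ast}$, it then suffices to prove that the defining limit
\[
\widetilde{\Delta}_{c,2a-1}S(i\phi)(\xi)=\lim_{N\to\infty}\frac{1}{N^{2a-1}}\sum_{k=1}^{N}\langle (S(i\phi))''(\xi),e_k\otimes e_k\rangle
\]
exists for every $\xi\in E$ and equals $S(i(\Delta_{G,2a-1}\phi))(\xi)$; this simultaneously shows $i(\phi)\in\mathrm{Dom}(\Delta_{c,2a-1})$ and the asserted intertwining relation.

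Next I would carry out the computation at the level of coefficients. Writing $\mathbf{k}=(k_1,\dots,k_n)$ and $b_{\mathbf{k}}=\langle e_{a,k_1}\otimes\cdots\otimes e_{a,k_n},f_n\rangle_{c,2a-1}$, the definition of $i$ gives
\[
S(i\phi)(\xi)=\sum_{n=0}^{\infty}\langle i(f_n),\xi^{\otimes n}\rangle=\sum_{n=0}^{\infty}\sum_{\mathbf{k}}b_{\mathbf{k}}\prod_{r=1}^{n}\langle e_{a,k_r},\xi\rangle .
\]
Differentiating twice in $\xi$ and pairing the two free slots with $e_k\otimes e_k$ produces, in the $f_n$-term, the factor $n(n-1)$ together with the inner normalized sum
\[
\frac{1}{N^{2a-1}}\sum_{k=1}^{N}\langle e_{a,k_{n-1}},e_k\rangle\,\langle e_{a,k_n},e_k\rangle ,
\]
the remaining factors being $\prod_{r\le n-2}\langle e_{a,k_r},\xi\rangle$. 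Condition (C1) is exactly the statement that this inner sum converges to $\delta_{k_{n-1}k_n}$ as $N\to\infty$. Passing to the limit collapses the last two indices into $\sum_{l}b_{k_1,\dots,k_n,l,l}$, which is precisely the coefficient sequence of $\tau_a\hat{\otimes}^2 f_{n+2}$ after shifting the summation index by two; hence the limit equals $S(i(\Delta_{G,2a-1}\phi))(\xi)$, as required.

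The main obstacle is the justification of interchanging $\lim_{N\to\infty}$ with the two outer summations, over the Fock level $n$ and over the multi-index $\mathbf{k}$; only after this interchange does the termwise application of (C1) yield the claimed limit. I would control the general term uniformly in $N$ using the three structural inputs already available: the uniform bound $|e_{a,k}|_{-p}\le M$ of (C2), the $\ell^1$-summability of $\{b_{\mathbf{k}}\}$ established in the Cauchy--Schwarz step of Lemma \ref{111}, and the rapid decay of the exotic Fock norms $\|\phi\|_{\mathcal{N}_{a,p}}$ for all $p$, which holds because $\phi\in(\mathcal{N}_a)$. Condition (C1) furnishes an $N$-uniform bound on the partial-trace quantities above, so that the general term of the double series admits an $N$-independent summable majorant and dominated convergence legitimizes the termwise passage to the limit. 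The one point demanding care is to match the $\mathbb{C}$-bilinear pairing appearing in the definition of $\widetilde{\Delta}_{c,2a-1}$ with the sesquilinear form of (C1), which is handled by the conjugation built into the trace conventions; once this is in place the series manipulations are routine, and the proof concludes by injectivity of $S$.
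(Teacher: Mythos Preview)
The paper does not supply its own proof of this theorem: it is presented as a restatement, in the language of the imbedding $i$ constructed in this section, of results already established in Accardi--Ji--Sait\^{o} \cite{ACJIS}. Your direct approach via $S$-transforms is the natural one, and the formal computation is correct: termwise application of (C1) collapses the Ces\`aro-averaged partial trace to the exotic trace $\tau_a$ and reproduces exactly the coefficient sequence of $\Delta_{G,2a-1}\phi$, so that after the index shift $n\mapsto n+2$ the two $S$-transforms agree.

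The genuine gap lies in the dominated-convergence step. You assert that ``(C1) furnishes an $N$-uniform bound on the partial-trace quantities,'' and indeed for each \emph{fixed} pair $(l_1,l_2)$ the convergence in (C1) gives a finite constant
\[
C_{l_1,l_2}=\sup_{N}\frac{1}{N^{2a-1}}\Bigl|\sum_{j=1}^{N}\overline{\langle e_{a,l_1},e_j\rangle}\,\langle e_{a,l_2},e_j\rangle\Bigr|<\infty .
\]
But to build an $N$-independent summable majorant for the double series over $n$ and $\mathbf{k}$ you need $\sum_{\mathbf{k}}|b_{\mathbf{k}}|\,C_{k_{n-1},k_n}<\infty$, and conditions (C1)--(C3) give no control whatsoever on the growth of $C_{l_1,l_2}$ with the indices: neither the $\ell^1$-bound on $\{b_{\mathbf{k}}\}$ from Lemma~\ref{111} nor the rapid $\lambda_{a,k}$-decay available because $\phi\in(\mathcal{N}_a)$ helps, since (C1)--(C3) establish no relation at all between $C_{l_1,l_2}$ and the weights $\lambda_{a,l}$. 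In Example~\ref{pw} one has $|\langle e_{a,l},e_j\rangle|=\sqrt{2a-1}\,j^{a-1}$ independently of $l$, so by Cauchy--Schwarz $C_{l_1,l_2}$ is bounded uniformly in $l_1,l_2$ and your argument goes through; under the bare hypotheses (C1)--(C3), however, this uniformity is not available, and you would need either to strengthen (C1) to a uniform version or to justify the interchange of limit and summation by a different device. Your remark about matching the $\mathbb{C}$-bilinear pairing in $\widetilde{\Delta}_{c,2a-1}$ with the sesquilinear form in (C1) is also a real issue rather than pure bookkeeping, since the unconjugated Ces\`aro sum $N^{-(2a-1)}\sum_{j\le N}\langle e_{a,l_1},e_j\rangle\langle e_{a,l_2},e_j\rangle$ is not covered by (C1) as stated.
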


\section{Exotic heat equations}
In this section, as an application of the results in the previous section, we consider 
the heat equation associated with the Exotic Laplacian:  
\begin{equation} \label{heat}
\begin{split}
\frac{\partial }{\partial t } u(t,\cdot ) &= \Delta _{c, 2a-1}u (t, \cdot) \\
u(0, \cdot )&= \Phi .
\end{split}
\end{equation}
We call $u : [0,T) \ni t \mapsto u(t, \cdot ) \in (E)^{\ast }$ is a solution of \eqref{heat} for $0 \leq t <T$ if: \\
~(i) $t \mapsto u(t, \cdot)$ is continuous on $[0,T)$ in the strong topology on $(E)^{\ast}$ \\
~(ii) $t \mapsto u(t, \cdot)$ is differentiable on $(0,T)$ in the strong topology on $(E)^{\ast}$ \\
~(iii) $u(t,\cdot ) \in \text{Dom}(\Delta _{c,2a-1})$ for any $t \in (0,T)$\\
and satisfies \eqref{heat}. 

It is difficult to find a solution for every initial condition $\Phi$. 
However, if $\Phi$ is regular in Exotic sense, we can apply the results on the heat equation associated with the Gross Laplacian to obtain the regularity of the solution in Exotic sense, then the solution can be  imbedded into the usual Hida-Kubo-Takenaka space.   

We use the following property on the Gross Laplacian. See e.g.  \cite{KUO1} for the proof. 

\begin{lemma}
Let $p>1$ satisfy 
\[
\lambda _{a,1}^{2(p-1)} >2. 
\] 
Then, for $\phi = (f_n) \in \Gamma (\mathcal{N}_{a,p} )$ and $0\leq t < \lambda _{a,1} ^{2(p-1)} / 
|\tau _a|_{\mathcal{N}_{a,-1} ^{\otimes2} }$, 
\[
P_{a,t} \phi = \left( \sum _{m=0} ^{\infty} \frac{(n+2m)!}{n!m!}
t^m (\tau _a^{\otimes m}\widehat {\otimes}_{2m}f_{n+2m} )\right)
\in \Gamma (\mathcal{N}_{a,1} ), 
\]
and $P_{a,t} \phi$ is the solution of 
\begin{equation*} 
\begin{split}
\frac{\partial }{\partial t } u(t,\cdot ) &= \Delta _{G, 2a-1}u (t, \cdot) \\
u(0, \cdot )&= \Phi ,
\end{split}
\end{equation*}
where the topology is that of $\Gamma (\mathcal{N}_{a,1} )$. 
\end{lemma}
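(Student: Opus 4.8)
The plan is to recognise $P_{a,t}$ as the formal exponential $e^{t\Delta_{G,2a-1}}$ and to establish both assertions—membership in $\Gamma(\mathcal{N}_{a,1})$ and the solution property—by power-series estimates in the parameter $t$. First I would set $\psi_m = \frac{1}{m!}\Delta_{G,2a-1}^m\phi$ and verify by induction on $m$, using the defining formula $\Delta_{G,2a-1}\phi = ((n+2)(n+1)\tau_a\hat\otimes^2 f_{n+2})$, that the $n$-th component of $\psi_m$ is exactly $\frac{(n+2m)!}{n!m!}\tau_a^{\otimes m}\hat\otimes_{2m}f_{n+2m}$; the inductive step rests on the bookkeeping identity $(n+2)(n+1)/(n+2)! = 1/n!$ together with the composition $\tau_a\hat\otimes^2(\tau_a^{\otimes m}\hat\otimes_{2m}\,\cdot\,) = \tau_a^{\otimes(m+1)}\hat\otimes_{2m+2}\,\cdot\,$. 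This identifies $P_{a,t}\phi = \sum_{m\geq0}t^m\psi_m$ and yields at once the termwise relation $\Delta_{G,2a-1}\psi_m = (m+1)\psi_{m+1}$, which will later drive the differential equation.

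Next I would establish convergence in $\Gamma(\mathcal{N}_{a,1})$. The two basic ingredients are the contraction bound $|\tau_a^{\otimes m}\hat\otimes_{2m}f_{n+2m}|_{\mathcal{N}_{a,1}^{\otimes n}}\leq |\tau_a|_{\mathcal{N}_{a,-1}^{\otimes2}}^m\,|f_{n+2m}|_{\mathcal{N}_{a,1}^{\otimes(n+2m)}}$, pairing the dual slots of $\tau_a$ at level $-1$ against the slots of $f_{n+2m}$ at level $1$, and the regularity-gain inequality $|f_{N}|_{\mathcal{N}_{a,1}^{\otimes N}}\leq \lambda_{a,1}^{-(p-1)N}|f_{N}|_{\mathcal{N}_{a,p}^{\otimes N}}$, which uses $p>1$ and $\lambda_{a,k}\geq\lambda_{a,1}$. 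Inserting these into the triangle inequality for the $n$-th component and writing $a_N = \sqrt{N!}\,|f_N|_{\mathcal{N}_{a,p}^{\otimes N}}$ (so that $\sum_N a_N^2 = \|\phi\|_{\mathcal{N}_{a,p}}^2$), the Fock norm of $P_{a,t}\phi$ is controlled by the $\ell^2$-norm of the double series whose terms are $\frac{\sqrt{(n+2m)!/n!}}{m!}\,z^m\,\lambda_{a,1}^{-(p-1)n}\,a_{n+2m}$ with $z = t\,|\tau_a|_{\mathcal{N}_{a,-1}^{\otimes2}}\,\lambda_{a,1}^{-2(p-1)}$. I would bound this through a Schur/Cauchy-Schwarz argument in $m$: the ratio $\sqrt{(n+2m+2)(n+2m+1)}/(m+1)$ of successive combinatorial coefficients has a finite limit as $m\to\infty$, so for $z$ below the corresponding threshold the inner $m$-series converges, and matching this threshold with the hypothesis $\lambda_{a,1}^{2(p-1)}>2$ produces a nonempty admissible interval and absorbs the combinatorial constant. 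This gives $P_{a,t}\phi\in\Gamma(\mathcal{N}_{a,1})$ with $\|P_{a,t}\phi\|_{\mathcal{N}_{a,1}}\leq C\,\|\phi\|_{\mathcal{N}_{a,p}}$ for $t$ in the stated range.

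Finally I would check that $u(t) = P_{a,t}\phi$ solves the Cauchy problem in the topology of $\Gamma(\mathcal{N}_{a,1})$. Continuity on $[0,T)$ and differentiability on $(0,T)$ follow from uniform convergence on compact subintervals of both $\sum_m t^m\psi_m$ and its termwise derivative $\sum_{m\geq1}mt^{m-1}\psi_m$, the extra factor $m$ leaving the radius of convergence unchanged; the same estimate gives $u(0)=\psi_0=\phi$. Differentiating term by term and substituting $m\psi_m = \Delta_{G,2a-1}\psi_{m-1}$ then yields $u'(t) = \sum_{m\geq1}t^{m-1}\Delta_{G,2a-1}\psi_{m-1} = \Delta_{G,2a-1}u(t)$.

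The hard part will be the norm estimate of the second step: the combinatorial weights $\frac{(n+2m)!}{n!m!}$ grow faster than geometrically in $m$, and the whole point is to show that this growth is exactly balanced by the regularity gain $\lambda_{a,1}^{-2(p-1)m}$ supplied by the gap $p>1$ and by the factorial weights built into the Fock norm, so that the series remains summable on all of $[0,\lambda_{a,1}^{2(p-1)}/|\tau_a|_{\mathcal{N}_{a,-1}^{\otimes2}})$. A secondary technical point is the interchange of $\Delta_{G,2a-1}$ with the infinite sum in the last step, which I would justify through the continuity (equivalently, closedness) of $\Delta_{G,2a-1}$ as a map $\Gamma(\mathcal{N}_{a,p})\to\Gamma(\mathcal{N}_{a,1})$, so that the term-by-term computation is legitimate.
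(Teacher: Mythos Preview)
The paper does not actually prove this lemma: it simply states the result and refers the reader to Kuo's book \cite{KUO1}. Your outline is precisely the standard argument one finds in that reference---identify $P_{a,t}$ with the exponential series $\sum_{m\geq 0}\frac{t^m}{m!}\Delta_{G,2a-1}^m$, control the Fock norm via the contraction estimate $|\tau_a^{\otimes m}\hat\otimes_{2m}f_{n+2m}|_{1}\le |\tau_a|_{-1}^m|f_{n+2m}|_{1}$ together with the regularity gain $|f_N|_1\le\lambda_{a,1}^{-(p-1)N}|f_N|_p$, and then differentiate the power series term by term. So there is nothing to compare against, and your plan is the right one.

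One point to tighten when you write it out: your ratio-test computation gives the limiting ratio $\sqrt{(n+2m+2)(n+2m+1)}/(m+1)\to 2$, which na\"ively yields convergence only for $z=t\,|\tau_a|_{-1}\,\lambda_{a,1}^{-2(p-1)}<1/2$, i.e.\ for $t$ up to half the stated threshold. To recover the full interval $[0,\lambda_{a,1}^{2(p-1)}/|\tau_a|_{-1})$ you will need to exploit the hypothesis $\lambda_{a,1}^{2(p-1)}>2$ more carefully than just ``absorbing the combinatorial constant''; the usual device is to interpose an auxiliary index $q\in(1,p)$ and use part of the gap $p-1$ to kill the factor $2$ while the remainder still furnishes the geometric decay. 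Make sure this step is explicit.
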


Using this lemma and the relationship 
\[
i( \Delta _{G, 2a-1} \phi ) = \Delta _{c, 2a-1} i(\phi ) , \phi = (f_n)_{n=0}^{\infty} \in \Gamma (\mathcal{N}_{a,p} ) 
\]
for $p>1$, we obtain the following.  
\begin{theorem}
Let $\phi \in (E)^{\ast}$. Assume that there exists $\{e_{a,k}\}$ and $\{\lambda _{a,k} \}$ such that 
$\Phi = i(\phi ) $ with $\phi \in \Gamma (\mathcal{N}_{a,p})$, where $p>1$ satisfy
$\lambda _{a,1} ^{2(p-1)} > 2$. Then, $i (P_{a,t} \phi)$ is a solution of $\eqref{heat}$ 
for $0< t <\lambda _{a,1} ^{2(p-1)} / |\tau _a|_{\mathcal{N}_{a,-1} ^{\otimes 2}}$. 

In particular, if $\Phi = i(\phi)$ with $\phi \in (\mathcal{N}_a)$, 
then $i(P_{a,t}\phi)$ is a solution of $\eqref{heat}$ for $t>0$. 
\end{theorem}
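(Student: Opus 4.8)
The plan is to verify that $u(t) := i(P_{a,t}\phi)$ satisfies the three defining conditions (i)--(iii) of a solution of \eqref{heat}, transporting the corresponding properties of the Gross-heat curve $t \mapsto P_{a,t}\phi$ across the imbedding $i$. The three inputs are: the preceding Lemma, which makes $t \mapsto P_{a,t}\phi$ a $\Gamma(\mathcal{N}_{a,1})$-valued solution of the Gross heat equation on $[0,T)$, where $T = \lambda_{a,1}^{2(p-1)}/|\tau_a|_{\mathcal{N}_{a,-1}^{\otimes 2}}$; the Corollary, which says $i$ restricts to a bounded linear map $\Gamma(\mathcal{N}_{a,1}) \to \Gamma(E_{-q}) \hookrightarrow (E)^{\ast}$; and the intertwining identity $i(\Delta_{G,2a-1}\psi) = \Delta_{c,2a-1}i(\psi)$, valid for $\psi \in \Gamma(\mathcal{N}_{a,p'})$ with $p' > 1$.

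First I would obtain conditions (i) and (ii) together with the initial value. Because $i : \Gamma(\mathcal{N}_{a,1}) \to (E)^{\ast}$ is continuous and linear, composing it with the $\Gamma(\mathcal{N}_{a,1})$-continuous curve $t \mapsto P_{a,t}\phi$ yields a strongly continuous $(E)^{\ast}$-valued curve $u$ on $[0,T)$, which is (i), and evaluating at $t=0$ gives $u(0) = i(\phi) = \Phi$. Since a continuous linear map commutes with differentiation of a vector-valued curve, the $\Gamma(\mathcal{N}_{a,1})$-differentiability of $P_{a,t}\phi$ on $(0,T)$ passes to strong differentiability of $u$ on $(0,T)$, with $\frac{d}{dt}u(t) = i\!\left(\frac{d}{dt}P_{a,t}\phi\right) = i(\Delta_{G,2a-1}P_{a,t}\phi)$, which is (ii).

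It remains to identify the right-hand side through the intertwining relation, which simultaneously delivers (iii) and the equation: once $i(\Delta_{G,2a-1}P_{a,t}\phi) = \Delta_{c,2a-1}i(P_{a,t}\phi)$ is known, we read off $u(t) \in \text{Dom}(\Delta_{c,2a-1})$ and $\frac{d}{dt}u(t) = \Delta_{c,2a-1}u(t)$. The hard part is exactly the applicability of the intertwining relation to $P_{a,t}\phi$: the Lemma only places this element in $\Gamma(\mathcal{N}_{a,1})$, whereas the identity requires regularity strictly above $1$. I would resolve this pointwise in $t$. For a fixed $t \in (0,T)$, choose an intermediate exponent $p' \in (1,p)$ with $\lambda_{a,1}^{2(p-p')} > 2$ and $t < \lambda_{a,1}^{2(p-p')}/|\tau_a|_{\mathcal{N}_{a,-1}^{\otimes 2}}$; this is possible because, as $p' \downarrow 1$, the last bound increases to $T > t$ while $\lambda_{a,1}^{2(p-p')} > 2$ persists by continuity from $\lambda_{a,1}^{2(p-1)} > 2$. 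Applying the Lemma with target regularity $p'$ in place of $1$ (the same estimate, with the target exponent $1$ replaced by $p'$) gives $P_{a,t}\phi \in \Gamma(\mathcal{N}_{a,p'})$, and since $p' > 1$ the intertwining relation applies at this $t$. Collecting (i)--(iii) proves the first assertion on $[0,T)$.

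Finally, for the ``in particular'' statement, if $\phi \in (\mathcal{N}_a) = \projlim_{p} \Gamma(\mathcal{N}_{a,p})$ then $\phi$ belongs to every $\Gamma(\mathcal{N}_{a,p})$, so $p$ may be taken arbitrarily large; since $\lambda_{a,1} > 1$ forces $T = \lambda_{a,1}^{2(p-1)}/|\tau_a|_{\mathcal{N}_{a,-1}^{\otimes 2}} \to \infty$ as $p \to \infty$, the existence intervals exhaust $(0,\infty)$ and the solutions agree where they overlap, being given by the single formula for $P_{a,t}\phi$. In this case the obstacle above evaporates: because $\phi$ has arbitrarily high regularity, $P_{a,t}\phi$ lies in $\Gamma(\mathcal{N}_{a,p'})$ for every $p'$, i.e.\ in $(\mathcal{N}_a)$, so the intertwining relation applies directly for all $t > 0$.
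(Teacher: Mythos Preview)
Your proof is correct and follows exactly the route the paper indicates: the paper gives no detailed argument beyond ``Using this lemma and the relationship $i(\Delta_{G,2a-1}\phi)=\Delta_{c,2a-1}i(\phi)$ \dots\ we obtain the following,'' and your write-up supplies precisely those details, transporting continuity, differentiability, and the equation across the bounded linear imbedding $i$. Your extra care in choosing an intermediate $p'\in(1,p)$ so that $P_{a,t}\phi\in\Gamma(\mathcal{N}_{a,p'})$ (and hence the intertwining identity applies) is a genuine gap-filling step that the paper glosses over.
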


\section*{Acknowledgements}
The author wishes to express his sincere gratitude to Prof. N. Obata and Prof. K. Saito for their helpful advices and comments.

\end{document}